\newtheorem{theorem}{Theorem}[section]
\newtheorem{proposition}[theorem]{Proposition}
\newtheorem{lemma}[theorem]{Lemma}
\newtheorem{question}[theorem]{Question}
\begin{document}

\title{Hamiltonian circle actions on complete intersections}

\author{Nicholas Lindsay}

\maketitle

\begin{abstract}
We study the problem of determining which diffeomorphism classes of K\"{a}hler manifolds admit a Hamiltonian circle action. Our main result is the following: Let $M$ be a closed symplectic manifold, diffeomorphic to a complete intersection with complex dimension $4k$, having a Hamiltonian circle action such that each component of the fixed point set is an isolated fixed point or has dimension $2 \mod 4$.  Then $M$ is diffeomorphic to  $\mathbb{CP}^{4k}$, a quadric $Q \subset \mathbb{CP}^{4k+1}$ or an intersection of two quadrics $Q_1 \cap Q_2 \subset \mathbb{CP}^{4k+2}$. 
\end{abstract}
\section{Introduction}

Smooth complete intersections over $\mathbb{C}$ having infinite automorphism groups are classified. Benoist has shown that aside from cubic plane curves,  projective spaces and quadrics are the only examples \cite{B}[Theorem 3.1]. Here we prove a symplectic version of (a partial subcase) of that theorem, our main result is as follows \footnote{Throughout the article, dimension refers to real dimension unless stated otherwise.}.

 \begin{theorem} \label{main} Let $(M,\omega)$ be a closed symplectic manifold with dimension $8k$, diffeomorphic to a complete intersection. Suppose that $M$ has a Hamiltonian $S^1$-action such that each component of the fixed point set is an isolated fixed point or has dimension $2 \mod 4$. Then $M$ is diffeomorphic to $\mathbb{CP}^{4k}$, a quadric $Q \subset \mathbb{CP}^{4k+1}$ or an intersection of two quadrics $Q_1 \cap Q_2 \subset \mathbb{CP}^{4k+2}$. 

\end{theorem}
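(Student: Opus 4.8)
The strategy is to translate the problem, via the diffeomorphism classification of complete intersections, into a bound on a characteristic number of $M$, and then to extract that bound from the $S^1$-action by equivariant localization. This is analogous in spirit to Benoist's algebraic argument, where an infinite automorphism group forces the defining equations of the complete intersection to have low degree; here the circle action plays the role of the algebraic group, and characteristic-number identities coming from fixed-point localization play the role of the deformation-theoretic count of equations.

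\emph{Step 1: topological reduction.} A complete intersection $X$ of complex dimension $4k$ has the rational cohomology ring of $\mathbb{CP}^{4k}$ in all degrees except the middle degree $4k$, so $b_j(X)=b_j(\mathbb{CP}^{4k})$ for $j\neq 4k$ and $\chi(X)=4k+b_{4k}(X)$. By the work of Libgober--Wood together with the surgery-theoretic classification (Kreck, Traving, Fang--Klaus), the diffeomorphism type of such an $X$ is determined by $k$, the total degree $d=\prod d_i$, and its Pontryagin numbers --- equivalently by $k$ together with $\chi(X)$ and the lower Pontryagin classes; and the Euler characteristics that occur form a discrete set whose three smallest values are realized precisely by $\mathbb{CP}^{4k}$, the quadric, and the intersection of two quadrics, after which there is a gap (the cubic, the quartic, $X(2,2,2)$, etc.\ all having strictly larger $\chi$). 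Hence the theorem reduces to the single inequality
\[
b_{4k}(M)\ \le\ b_{4k}\bigl(Q_1\cap Q_2\subset\mathbb{CP}^{4k+2}\bigr),\qquad\text{equivalently}\qquad \chi(M)\le\chi(Q_1\cap Q_2),
\]
together with the remark that these three diffeomorphism types are the only complete intersections of complex dimension $4k$ meeting it.

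\emph{Step 2: Morse theory, and the role of the hypothesis.} The moment map $\mu\colon M\to\mathbb{R}$ is a perfect Morse--Bott function whose critical submanifolds are the fixed components $F$, each of even index $\lambda_F$, so $b_j(M)=\sum_F b_{j-\lambda_F}(F)$. Since $H^{\mathrm{odd}}(M;\mathbb{Q})=0$ for a complete intersection, every $F$ has $H^{\mathrm{odd}}(F;\mathbb{Q})=0$; for a component with $\dim F\equiv 2\bmod 4$ the middle real degree of $F$ is odd, so this forces $H^{\mathrm{mid}}(F)=0$, hence $\sigma(F)=0$, and $F$ carries no cohomology in degrees congruent to $\dim_\mathbb{R} F$ modulo $4$. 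This is exactly where the hypothesis enters: the positive-dimensional fixed components are cohomologically thin and are prevented from contributing to the delicate part of the fixed-point data. In addition, $b_2(M)=1$ --- read off from the Morse--Bott stratification together with its Poincar\'e dual at $b_{8k-2}$ --- forces the extremal components $M^{S^1}_{\min}$, $M^{S^1}_{\max}$ and the index-$2$ and index-$(8k-2)$ data to be essentially unique, so the isotropy representations at the extrema are strongly constrained. One then combines: equivariant localization for the Euler characteristic ($\chi(M)=\sum_F\chi(F)$) with the rigidity of the signature operator (Atiyah--Hirzebruch, Atiyah--Singer), which makes $\mathrm{sign}(g,M)$ constant in $g\in S^1$ and equal to $\sigma(M)$ while the $G$-signature formula writes it as a sum of local contributions; isolated fixed points contribute $\pm1$, and the vanishing of $\sigma(F)$ and of the middle cohomology of each positive-dimensional $F$, together with the $w\leftrightarrow-w$ symmetry in that formula, kills their contributions. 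Feeding in the constraints from $b_2(M)=1$ bounds the signed count of isolated fixed points, hence $\sigma(M)$, and then the explicit Hirzebruch generating functions relating $\sigma$, $\chi$ and the multidegree of a complete intersection yield the inequality of Step 1, after which the list of three diffeomorphism types follows.

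\emph{Main obstacle.} The genuinely hard part is controlling the local contributions of the positive-dimensional fixed components. Their contribution to $\mathrm{sign}(g,M)$ (and the analogous contribution to $\chi$, or to the Todd/$\chi_y$-genera if one prefers those) is a twisted integral over $F$ of $L(F)$ weighted by the $S^1$-representation on the normal bundle, and showing that the congruence $\dim F\equiv 2\bmod 4$ forces it to vanish --- or at least not to spoil the bound --- requires a careful pairing of the weight-$w$ and weight-$(-w)$ pieces of the $G$-signature formula, using that $F$ has odd complex dimension and vanishing middle cohomology. A secondary difficulty is turning $b_2(M)=1$ into a sharp bound on the number of isolated fixed points in the presence of positive-dimensional components, which needs a reasonably detailed analysis of how the moment-map stratification of such an $M$ can look; controlling both simultaneously, and matching the resulting numerics against the Hirzebruch polynomials for complete intersections, is the technical heart of the argument.
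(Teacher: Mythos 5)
There is a genuine gap, and it sits exactly where you placed your ``main obstacle.'' Your Step 1 reduces the theorem to the bound $\chi(M)\le\chi(Q_1\cap Q_2)$, but the circle action does not yield an a priori bound on $\chi(M)$ or $b_{4k}(M)$, and your Step 2 never produces one: the assertion that ``feeding in the constraints from $b_2(M)=1$ bounds the signed count of isolated fixed points, hence $\sigma(M)$'' is unsubstantiated (the signed count is exactly $\sigma(M)$ by localization, but nothing in your outline bounds it, let alone relates it to $\chi$). What the action actually gives --- and what the paper proves --- is the \emph{identity} $\sigma(M)=\sum_j b_{4j}(M)-\sum_j b_{4j+2}(M)=b_{4k}(M)$, i.e.\ the intersection form on $H^{4k}(M;\mathbb{R})$ is positive definite; the endgame is then Libgober--Wood's classification of complete intersections with definite middle intersection form, not a gap in the list of attainable Euler characteristics (a claim you assert but do not verify, and which is a different, unproved combinatorial statement about the Hirzebruch polynomials). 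Also, your statement that a fixed component $F$ with $\dim F\equiv 2\bmod 4$ ``carries no cohomology in degrees congruent to $\dim_{\mathbb{R}}F$ modulo $4$'' is false: degree $2$ is such a degree and $b_2(F)\ge 1$ since $F$ is symplectic.

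The step you defer is resolved in the paper by a soft Betti-number argument rather than by pairing weights in the $G$-signature local terms. Since the moment map is a perfect Morse--Bott function with even indices and $b_i(M)=b_i(\mathbb{CP}^{4k})$ for $i\ne 4k$, the manifold $M$ has at most one Betti number exceeding $1$; on the other hand a component $N$ of dimension $2m$ with $m$ odd has, by Poincar\'e duality, each even Betti number repeated in two \emph{distinct} even degrees (as $2i=2m-2i$ would force $m$ even), so no even Betti number of $N$ can exceed $1$, and since $N$ is symplectic they all equal $1$. The alternating sum $\sum_j b_{4j}(N)-\sum_j b_{4j+2}(N)$ over the $m+1$ even degrees (an even count) then telescopes to $0$, so such components drop out of the localized alternating sum of Betti numbers; on the signature side their contribution vanishes by the observation of Jones--Rawnsley that you correctly anticipate. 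Combining the two localizations gives $\sigma(M)=b_{4k}(M)$ directly, with no need for the normal-bundle weight analysis or the $b_2(M)=1$ rigidity you invoke. To repair your write-up you would need either to carry out that $G$-signature computation in full, or (far easier) to replace Steps 1--2 by the definiteness argument just described.
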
 

The fixed point set of a Hamiltonian circle action is a collection of symplectic submanifolds of possibly different dimensions. We emphasise that the condition on the fixed point set is simultaneous, i.e. if the fixed point set consisted of an isolated fixed point and a symplectic $6$-manifold, then Theorem \ref{main} would apply. In particular, Theorem \ref{main}  applies to actions such that all components of the fixed point set have dimension at most $2$.

 Our motivation for proving Theorem \ref{main} comes in two parts:

\begin{enumerate}

\item It recovers part of the algebraic result \cite{B}[Theorem 3.1] using purely topological methods. \item  It excludes the existence of exotic Hamiltonian circle actions on a large class of Fano varieties. \end{enumerate}

Exotic Hamiltonian circle actions on K\"{a}hler manifolds are known to exist in various senses  \cite{To1,GKZ1,GKZ2,LP2}, although conjecturally do not occur on Fano varieties.  In \cite{DW} it was shown that complete intersections with dimension $6$ have a smooth circle action $\iff$ the holomorphic automorphism group is infinite.  

As was pointed out in \cite[Page 3]{DW} it is also possible to say something about smooth actions in higher dimensions. Let $X$ be a complete intersection of complex dimension $2k$, then $$\hat{A}(X) = 0 \iff X\text{ is Fano} .$$ Hence if the complete intersection is even complex dimension, spin and non-Fano the non-vanishing of the $\hat{A}$-genus rules out the existence of any smooth circle action, by a theorem of Atiyah and Hirzebruch \cite{AH}.

We give a brief overview of the proof of Theorem \ref{main}. The first step is to prove Proposition \ref{gen}, which may be considered as a generalisation of a formula of Jones and Rawnsley \cite[Theorem 1.1]{JR}. Proposition \ref{gen} expresses the signature of a symplectic manifold with a Hamiltonian circle action as the alternating sum of its even degree Betti numbers, under certain assumptions on the Betti numbers and the fixed point set. 
   
 The next step of the proof is Lemma \ref{can}, where it is shown that for complete intersections of complex dimension $4k$ the alternating sum of the even degree Betti numbers is equal to the middle Betti number. Hence, if the underlying smooth manifold has a Hamiltonian circle action then the intersection form on the middle cohomology group is positive definite. Finally, we apply a classification of complete intersections whose intersection form is positive definite due to Libgober and Wood \cite{LW}, which completes the proof of Theorem \ref{main}. 
 
 \textbf{Acknowledgements}. I would like to thank Volker Puppe for pointing me to the reference \cite{DW} which initiated my interest in this question. I would like to thank the referee for helpful comments.

 \section{Preliminaries}
 \subsection{Hamiltonian circle actions}

 We recall some foundational results on Hamiltonian circle actions, the proofs of which may be found in \cite[Chapter 5]{MS}. Suppose a closed symplectic manifold $(M,\omega)$ has an $S^1$-action generated by a vector field $X$, then the circle action is called Hamiltonian if $dH= \omega(X,\cdot)$ for some function $H$ on $M$ called the Hamiltonian. The fixed point set of the action, denoted $M^{S^1}$  is equal to the critical set of $H$, and is a collection of symplectic submanifolds. It will also be convenient to define a subset $M^{S^1}_{I} \subset M^{S^1}$ consisting of isolated fixed points. 
 
 Also, $H$ is a perfect Morse-Bott function.  For a fixed component $F$ denote by $\lambda_{F}$  half the Morse-Bott index of $H$ along $F$. It holds that  $\lambda_{F}$ is also equal to the the number of strictly negative weights of the $S^1$-action along $F$ counted with multiplicity. Together, these facts yield the following localisation formula for the Betti numbers \cite[Page 9]{LT}.
 
 \begin{theorem} \label{locbet}
 Let $(M,\omega)$ be a closed symplectic manifold having a Hamiltonian circle action. Then for each $i$, $$ b_{i}(M) = \sum_{F \subset M^{S^1}} b_{i - 2\lambda_{F}(M)} (F) ,$$ where the sum runs over connected components of the fixed point set. 
 \end{theorem}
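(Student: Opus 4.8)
The plan is to deduce the formula directly from Morse--Bott theory applied to the Hamiltonian $H$, using the two facts recalled just above: that $H$ is a perfect Morse--Bott function and that its Morse--Bott index along a fixed component $F$ is $2\lambda_F$ (in the notation of the preceding paragraph). I would organise the argument around the Poincar\'e polynomial $P_t(\cdot)=\sum_j b_j(\cdot)\,t^j$. Since $M$ is compact, $M^{S^1}$ has finitely many components, each a closed symplectic submanifold $F$, and the negative normal bundle $\nu_F^{-}$ of $H$ along $F$ is a complex vector bundle of (complex) rank $\lambda_F$: the $S^1$-action on the normal bundle of $F$ has no nonzero fixed vectors, so it splits into weight subbundles, each of which carries a canonical complex structure, and $\nu_F^{-}$ is the sum of the negative-weight pieces. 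In particular $\nu_F^{-}$ is orientable, so the relevant Thom isomorphisms hold with untwisted coefficients; this is precisely what makes the statement one about ordinary Betti numbers rather than about cohomology with a local coefficient system.

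The Morse--Bott inequalities for $H$ then read
\[
\sum_{F\subset M^{S^1}} t^{\,2\lambda_F}\,P_t(F)\;=\;P_t(M)\;+\;(1+t)\,R(t)
\]
for some polynomial $R$ with non-negative coefficients, and ``$H$ is a perfect Morse--Bott function'' is exactly the assertion that $R\equiv 0$. Granting this, comparing the coefficient of $t^{i}$ on both sides of the resulting identity $\sum_F t^{2\lambda_F}P_t(F)=P_t(M)$ gives $\sum_{F} b_{i-2\lambda_F}(F)=b_i(M)$, with the convention that $b_{i-2\lambda_F}(F)=0$ when $i-2\lambda_F$ is negative or exceeds $\dim F$; this is the claim. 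Structurally, the identity reflects the fact that as one sweeps the sublevel sets $M^{a}=H^{-1}(-\infty,a]$ past the critical values, $M^{a+\varepsilon}$ is built from $M^{a-\varepsilon}$ by attaching the disc bundle of $\nu_F^{-}$ along its sphere bundle (the sum being over the components $F$ at that critical level), so that $H^{\ast}(M^{a+\varepsilon},M^{a-\varepsilon})\cong\bigoplus_{F}H^{\,\ast-2\lambda_F}(F)$ by the Thom isomorphism; perfection says that all connecting maps in the long exact sequences of these pairs vanish, so the pieces assemble, over all critical levels, into a (non-canonical) graded isomorphism $H^{i}(M)\cong\bigoplus_{F}H^{\,i-2\lambda_F}(F)$.

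The single nontrivial input is the perfection of $H$, which I would treat as a citation --- Frankel's argument via harmonic forms in the K\"ahler case, and Atiyah--Bott and Kirwan via equivariant cohomology in general --- and which is the step I would flag as the main obstacle in a self-contained account. For orientation, the standard mechanism over $\mathbb{Q}$ is that $H^{\ast}_{S^1}(M)$ is a free module over $H^{\ast}(BS^1)=\mathbb{Q}[u]$ (the action is Hamiltonian, hence has fixed points), while the equivariant Euler class $e_{S^1}(\nu_F^{-})\in H^{\ast}_{S^1}(F)$ has leading term a nonzero scalar multiple of $u^{\lambda_F}$ and so is not a zero divisor; this forces the equivariant versions of the long exact sequences above to split, which is equivariant perfection, and specialising $u$ recovers ordinary perfection. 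Since the excerpt already asserts that $H$ is a perfect Morse--Bott function, the proof I would actually write down is essentially the bookkeeping of the previous paragraph.
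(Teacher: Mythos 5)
Your argument is correct and is essentially the paper's own: the paper derives Theorem \ref{locbet} directly from the two facts it records just beforehand (that the Hamiltonian is a perfect Morse--Bott function and that its index along $F$ is $2\lambda_F$), citing \cite{LT}, which is exactly the bookkeeping you carry out via the Poincar\'e polynomial and the Thom isomorphism. Your additional care about the orientability of the negative normal bundle and the source of perfection is sound but goes beyond what the paper writes down.
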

 
 The following formula is a consequence of the equivariant version of the Atiyah-Singer index theorem. The formula we will use is derived in \cite[Section 5.8]{HBJ}. We use $\sigma(M)$ to denote the signature.
  
  \begin{theorem} \label{locsig}
 Let $(M,\omega)$ be a closed symplectic manifold having a Hamiltonian circle action. Suppose that the fixed point set contains only isolated fixed points or submanifolds of dimension $2 \mod 4$. Then $$\sigma(M) = \sum_{p \in M^{S^1}_{I}} (-1)^{\lambda_{p}} .$$ 
 \end{theorem}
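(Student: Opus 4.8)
The plan is to apply the Atiyah--Singer $G$-signature theorem to a generic element $t$ of the circle, express $\sigma(M)$ as a sum of local contributions indexed by the components of the fixed point set --- each a rational function of $t$ --- and then read off the stated formula by letting $t\to\infty$. Passing to the limit is exactly what collapses the complicated local terms to the signs $(-1)^{\lambda_p}$.

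First I would observe that, since $S^1$ is connected, it acts trivially on $H^*(M;\mathbb{R})$, so the equivariant (or $G$-) signature $\mathrm{sign}(t,M)$ equals the ordinary signature $\sigma(M)$ for every $t\in S^1$; this is the rigidity of the signature, which for the connected group $S^1$ is immediate. The Atiyah--Singer $G$-signature theorem then gives, for all $t$ with $M^{t}=M^{S^1}$ (all but finitely many $t$),
\[
\sigma(M)\;=\;\sum_{F\subset M^{S^1}}\sigma(t,F),\qquad
\sigma(t,F)\;=\;\Big\{\,L(F)\cdot\!\!\prod_{m\neq 0}\prod_{a}\frac{t^{m}e^{y_{m,a}}+1}{t^{m}e^{y_{m,a}}-1}\,\Big\}[F].
\]
Here $L(F)$ is the Hirzebruch $L$-class of $F$; the normal bundle of $F$ is written $\bigoplus_{m\neq 0}N_m$, where $S^1$ acts on the complex bundle $N_m$ by $t^{m}$ (with the complex structure compatible with the symplectic orientation), the $y_{m,a}$ are the Chern roots of $N_m$, and $\{\ \}[F]$ denotes evaluation of the degree-$\dim F$ part on the fundamental class.

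Next I would compute $\lim_{t\to\infty}\sigma(t,F)$. Regarded as a power series in $y_{m,a}$, the factor $\dfrac{t^{m}e^{y_{m,a}}+1}{t^{m}e^{y_{m,a}}-1}$ tends to the constant $1$ when $m>0$ and to the constant $-1$ when $m<0$, so the whole normal contribution tends to $(-1)^{\lambda_F}$, where $\lambda_F$ is the number of negative normal weights counted with multiplicity --- which by the preliminaries is the $\lambda_F$ of the statement. Hence $\lim_{t\to\infty}\sigma(t,F)=(-1)^{\lambda_F}\{L(F)\}[F]=(-1)^{\lambda_F}\sigma(F)$ by the Hirzebruch signature theorem. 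For an isolated fixed point $p$ this is just $(-1)^{\lambda_p}$. For a \emph{positive-dimensional} component the hypothesis forces $\dim F\equiv 2\pmod 4$, so the intersection form on $H^{(\dim F)/2}(F;\mathbb{R})$ is skew-symmetric and $\sigma(F)=0$ (equivalently, $L(F)$ has no component in degree $\dim F$). Since the two sides of the displayed identity agree for infinitely many $t$, the rational function $\sum_F\sigma(t,F)$ is identically $\sigma(M)$; as each summand has a finite limit at infinity, summing the limits gives $\sigma(M)=\sum_{p\in M^{S^1}_I}(-1)^{\lambda_p}$.

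The step needing the most care is pinning down the exact local term in the $G$-signature theorem: one must fix the orientation conventions so that the normal factors carry the \emph{signed} weights $m$, and check that the complex orientations of the $N_m$ together with the orientation of $F$ reconstruct the symplectic orientation of $M$ --- this is what makes the limit equal $(-1)^{\lambda_F}$ rather than its negative, and it is what selects $t\to\infty$ over $t\to 0$. Everything else is standard input (the $G$-signature theorem, the Hirzebruch signature theorem, triviality of the $S^1$-action on real cohomology) or elementary (the limit, the analytic continuation in $t$, and the vanishing of $\sigma(F)$ for $\dim F\equiv 2\bmod 4$).
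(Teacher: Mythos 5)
Your proposal is correct and is essentially the paper's own argument: the paper proves this by citing the $G$-signature formula of Hirzebruch--Berger--Jung and the computations in Jones--Rawnsley, and what you have written is precisely the content of those citations (rigidity of the equivariant signature, the local fixed-point contributions as rational functions of $t$, the $t\to\infty$ limit giving $(-1)^{\lambda_F}\sigma(F)$, and the vanishing of $\sigma(F)$ for $\dim F\equiv 2 \bmod 4$). Nothing is missing; you have simply unpacked the references rather than quoting them.
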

 \begin{proof}
 This formula follows from the formula of \cite[Section 5.8]{HBJ}, expressing the signature as a sum over the fixed point components. Fixed submanifolds of dimension $2 \mod 4$ contribute $0$ to the formula  (see the proof of \cite[Theorem 4.2]{JR}). Hence, the sum simplifies as a sum over $M^{S^1}_{I}$, the contribution of an isolated fixed point $p$ in terms of $\lambda_{p}$ is derived in \cite[Section 3]{JR}.
 \end{proof}

\subsection{Complete intersections}
Next we turn our attention to complete intersections, stating two preliminary results. For a comprehensive introduction to the topology of complete intersections see the introduction of \cite{LW}.  Firstly we state a well known consequence of the Lefschetz hyperplane theorem. 

\begin{lemma} \label{Lef}
Let $M$ be a smooth complete intersection of dimension $8k$, then $b_{i}(M) = b_{i}(\mathbb{CP}^{4k})$ for $i \neq 4k$.
\end{lemma}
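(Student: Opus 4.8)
The plan is to derive this as a standard consequence of the Lefschetz hyperplane theorem together with Poincar\'e duality. Since Betti numbers are diffeomorphism invariants, I may assume that $M = X$ is an honest nonsingular complete intersection $X \subset \mathbb{CP}^{4k+r}$, cut out transversally by $r$ hypersurfaces of degrees $d_1,\dots,d_r$, so that $\dim_{\mathbb{C}} X = 4k$ (the case $r=0$ being the trivial one $X = \mathbb{CP}^{4k}$).

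First I would handle the range $i < 4k$. Composing with a suitable Veronese embedding, each hypersurface of degree $d_j$ becomes a genuine hyperplane section, so $X$ may be presented as an iterated hyperplane section: there is a chain $X_0 = \mathbb{CP}^{4k+r} \supset X_1 \supset \cdots \supset X_r = X$ with each $X_j$ a smooth projective variety of complex dimension $4k+r-j$ (smoothness of the intermediate stages following from Bertini, since the relevant linear systems are base-point free). The Lefschetz hyperplane theorem gives that $H^i(X_{j-1};\mathbb{Z}) \to H^i(X_j;\mathbb{Z})$ is an isomorphism for $i < \dim_{\mathbb{C}} X_j = 4k+r-j$. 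Composing these isomorphisms over $j = 1,\dots,r$ yields $H^i(X;\mathbb{Z}) \cong H^i(\mathbb{CP}^{4k+r};\mathbb{Z})$ for $i < 4k$, and hence $b_i(M) = b_i(\mathbb{CP}^{4k+r}) = b_i(\mathbb{CP}^{4k})$ in this range (both spaces having $b_i = 1$ for $i$ even and $0$ for $i$ odd when $i < 4k$).

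Next, for $i > 4k$, I would invoke Poincar\'e duality: $M$ is a closed oriented manifold of real dimension $8k$, so $b_i(M) = b_{8k-i}(M)$, and since $8k - i < 4k$ the previous step gives $b_{8k-i}(M) = b_{8k-i}(\mathbb{CP}^{4k}) = b_i(\mathbb{CP}^{4k})$, the last equality by Poincar\'e duality on $\mathbb{CP}^{4k}$. This establishes $b_i(M) = b_i(\mathbb{CP}^{4k})$ for all $i \neq 4k$, the middle degree $i = 4k$ being exactly the one left out of the statement.

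I do not expect a genuine obstacle: the result is classical and could equally be quoted directly (e.g.\ from Milnor's treatment of complete intersections, or \cite{LW}). The only points needing a little care are the reduction of a complete intersection of arbitrary multidegree to a sequence of \emph{smooth} hyperplane sections via Veronese re-embedding, and bookkeeping of the precise degree range ($i < 4k$) in which the Lefschetz isomorphism holds before Poincar\'e duality is applied.
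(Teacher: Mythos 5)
Your argument is correct and is exactly the route the paper has in mind: the lemma is stated there without proof as "a well known consequence of the Lefschetz hyperplane theorem," and your write-up (iterated hyperplane sections via Veronese re-embeddings for $i<4k$, then Poincar\'e duality for $i>4k$) is the standard way to make that precise. The only point worth flagging is that the given defining equations need not have smooth intermediate zero loci, so the Bertini step should be read as first deforming to a generic complete intersection of the same multidegree (harmless, since smooth complete intersections of fixed multidegree form a connected family and are therefore diffeomorphic).
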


 Finally we state the result of Libgober and Wood which we use \cite[Page 638]{LW}.

\begin{theorem} \cite{LW} \label{libwood}
Suppose that $X$ is a complete intersection of dimension  $8k$, and suppose that intersection form on $H^{4k}(X,\mathbb{Z})$ is positive definite. Then, $X$ is diffeomorphic to a projective space, a quadric or an intersection of two quadrics.
\end{theorem}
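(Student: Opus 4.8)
The plan is to convert positive‑definiteness of the middle intersection form into a vanishing statement for Hodge numbers, and then to read off the admissible multidegrees from the known formulas for the Hodge numbers of a complete intersection. Write $X = X_{4k}(d_1,\dots,d_r) \subset \mathbb{CP}^{4k+r}$ with every $d_i \geq 2$ (an entry equal to $1$ can be discarded without changing $X$), and put $n = 4k$. By the Lefschetz hyperplane theorem $X$ is simply connected and $H^i(X) \cong H^i(\mathbb{CP}^{4k})$ for $i \neq 4k$ as Hodge structures, so all the interesting cohomology lies in the middle degree, where the cup form is symmetric and unimodular of rank $b_{4k}(X)$; positive‑definiteness is then equivalent to $\sigma(X) = b_{4k}(X)$. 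I would combine the Hodge index theorem $\sigma(X) = \sum_{p,q}(-1)^q h^{p,q}(X)$ with the Lefschetz identification of the off‑middle Hodge numbers --- which, because $n \equiv 0 \bmod 4$, makes the off‑middle part of that sum equal to $\sigma(\mathbb{CP}^{4k}) - (-1)^{2k} = 1 - 1 = 0$ --- to get $\sigma(X) = \sum_{p+q=4k}(-1)^q h^{p,q}(X)$. Since $b_{4k}(X) = \sum_{p+q=4k} h^{p,q}(X)$ and all Hodge numbers are nonnegative, the intersection form is positive definite if and only if $h^{p,q}(X) = 0$ for every $(p,q)$ with $p+q=4k$ and $q$ odd. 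The problem thus reduces to classifying the multidegrees for which all ``odd‑type'' middle Hodge numbers of $X_{4k}(\underline d)$ vanish.

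Next I would bring in the explicit description of the primitive middle Hodge numbers of a complete intersection. For a smooth hypersurface $X_n(d)$ this is Griffiths' residue formula $h^{\,n-q,\,q}_{\mathrm{prim}}(X_n(d)) = \dim R_{(q+1)d - n - 2}$, where $R$ is the Jacobian ring, an Artinian Gorenstein graded ring with $R_j \neq 0$ precisely for $0 \le j \le (n+2)(d-2)$; for $r \ge 2$ there is an analogous but more elaborate description, either via a Koszul‑type Jacobian ring of the complete intersection or, equivalently and more computably, via Hirzebruch's generating function for the $\chi_y$‑genus of complete intersections. Given such a formula, the problem becomes the elementary combinatorial one of deciding when an explicit finite sequence of nonnegative integers, with known support, has all of its odd‑indexed terms equal to $0$.

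Carrying out that combinatorial step is the crux. For a hypersurface it is short: $d = 1$ gives $X = \mathbb{CP}^n$; when $d = 2$ the Jacobian ring sits entirely in degree $0$, so the only degree $(q+1)\cdot 2 - n - 2 = 2q - n$ ever attained corresponds to the \emph{even} value $q = n/2$, whence $X_n(2)$ has no odd‑type primitive class and is positive definite; and when $d \ge 3$ the values of $q$ with $(q+1)d - n - 2 \in [\,0,\,(n+2)(d-2)\,]$ range over a real interval of length $(n+2)(1 - 2/d) \ge 2$, which therefore contains an odd integer, producing a nonzero odd‑type Hodge number, so no hypersurface of degree $\ge 3$ qualifies. \textbf{The main obstacle is the same analysis for $r \ge 2$:} one must show that, apart from $\underline d = (2,2)$ --- the intersection of two quadrics, whose primitive middle cohomology in fact turns out to be pure of type $(2k,2k)$ --- every remaining multidegree forces a nonzero middle Hodge number in some odd co‑degree. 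The cleanest route is the signature generating‑function computation of Libgober and Wood \cite{LW}: the quantity $b_n(X_n(\underline d)) - \sigma(X_n(\underline d))$ equals $2\sum_{q\text{ odd}} h^{\,n-q,\,q}_{\mathrm{prim}}(X_n(\underline d))$, and an estimate extracted from their generating function shows it is strictly positive for every multidegree other than $(1^m)$, $(1^m,2)$ and $(1^m,2,2)$. Once the admissible multidegrees are pinned down to these, reinstating the $1$'s identifies them with $\mathbb{CP}^{4k}$, a quadric $Q \subset \mathbb{CP}^{4k+1}$ and an intersection of two quadrics $Q_1 \cap Q_2 \subset \mathbb{CP}^{4k+2}$, which is precisely the claimed list --- note that no diffeomorphism classification of complete intersections is needed for this final identification.
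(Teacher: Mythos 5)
First, a point of comparison: the paper does not prove this statement at all; it is imported verbatim from Libgober and Wood \cite{LW}, so there is no internal argument to measure yours against. Judged on its own terms, your opening reduction is correct and is indeed the standard one: since a smooth complete intersection of complex dimension $n=4k$ has $H^i(X)\cong H^i(\mathbb{CP}^{4k})$ as Hodge structures off the middle degree, the off-middle terms contribute $1-1=0$ to the Hodge index formula, so positive definiteness of the (unimodular) middle form is equivalent to $\sigma(X)=b_{4k}(X)$, i.e.\ to the vanishing of every $h^{p,q}(X)$ with $p+q=4k$ and $q$ odd. Your hypersurface analysis is also complete and correct: the Jacobian ring of a smooth quadric is concentrated in degree $0$, forcing $q=2k$, while for $d\ge 3$ the support $[0,(n+2)(d-2)]$ of the (everywhere positive) Hilbert function of the Jacobian ring meets the progression $\{(q+1)d-n-2 : q\ \text{odd}\}$ because the relevant interval of $q$'s has length $(n+2)(1-2/d)\ge 2$ and hence contains an odd integer.

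The genuine gap is the codimension $r\ge 2$ case, which you correctly flag as the crux but then dispose of by asserting that ``an estimate extracted from their generating function shows'' $b_n(X)-\sigma(X)>0$ for every multidegree other than $(1^m)$, $(1^m,2)$ and $(1^m,2,2)$. That estimate \emph{is} the theorem: essentially all of the content of \cite{LW} in this direction consists of extracting from Hirzebruch's generating function for the $\chi_y$-genus a workable lower bound on $b_n-\sigma$ (equivalently, producing one nonzero odd-type middle Hodge number) for each remaining multidegree --- including cases such as $(2,2,2)$ and $(2,3)$, where the one-variable interval-length argument from the hypersurface case has no direct analogue and one must control a product of $r$ generating factors. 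You also use without justification that the primitive middle Hodge structure of $X_{4k}(2,2)$ is pure of type $(2k,2k)$; this is true but needs an argument (it follows from the same generating-function computation, or from the geometry of the pencil of quadrics). As it stands the proposal is a correct reduction plus a citation for the hard part --- no less than what the paper itself offers for this theorem, but not an independent proof. The final step, identifying the multidegrees $(1^m)$, $(1^m,2)$, $(1^m,2,2)$ with $\mathbb{CP}^{4k}$, a quadric and an intersection of two quadrics, is fine as written.
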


\section{Proof of Theorem \ref{main}}

We now proceed to the proof of our main result Theorem \ref{main}. Before giving the details of the proof, we describe the general approach. The idea of the proof was motivated by a theorem of Jones and Rawnsley  \cite[Theorem 1.1]{JR}, which states that if a closed symplectic $(M,\omega)$ has a Hamiltonian circle action with isolated fixed points then the following formula holds $$ \sigma(M) = \sum_{j \in \mathbb{Z}}b_{4j}(M) - \sum_{j \in \mathbb{Z}} b_{4j +2}(M) .$$

The first step in the proof of Theorem \ref{main} is Proposition \ref{gen}. It is a generalisation of \cite[Theorem 1.1]{JR} which is particularly well suited for studying complete intersections, since it assumes that the Betti numbers satisfy a condition that complete intersections satisfy (see Lemma \ref{Lef}). 

The method of the proof is to use the localisation formula for the Betti numbers Theorem \ref{locbet} and the localisation formula for the signature Theorem \ref{locsig}. Using Theorem \ref{locbet}, we express the alternating sum of the even degree Betti numbers as a certain sum depending on the fixed point set. Next, using a claim about the Betti numbers of the fixed components which is delayed to the end of the proof (Claim A), we show that this simplifies to a sum over the isolated fixed points. Finally we show that this expression is equal to the formula for the signature from Theorem \ref{locsig}. This is Jones and Rawnsley's original method of proving \cite[Theorem 1.1]{JR}, with additional considerations needed to deal with fixed components of dimension $2 \mod 4$.

\begin{proposition} \label{gen}
Suppose that $M$ is a closed symplectic manifold of dimension $8k$. Suppose that $M$ has a Hamiltonian $S^1$-action such that each component of the fixed point set is an isolated fixed point or has dimension $2 \mod 4$. Suppose also that $b_{i}(M) = b_{i}(\mathbb{CP}^{4k})$ for $i \neq 4k$. Then $$ \sigma(M) = \sum_{j \in \mathbb{Z}}b_{4j}(M) - \sum_{j \in \mathbb{Z}} b_{4j +2}(M) .$$

\end{proposition}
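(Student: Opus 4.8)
The plan is to follow the Jones--Rawnsley strategy, using the two localisation formulas (Theorem \ref{locbet} and Theorem \ref{locsig}) as the main engines, with the hypothesis $b_i(M) = b_i(\mathbb{CP}^{4k})$ for $i \neq 4k$ being used to control the Betti numbers of the fixed components. First I would expand the alternating sum of even Betti numbers of $M$ via Theorem \ref{locbet}: writing $b_{2i}(M) = \sum_{F} b_{2i - 2\lambda_F}(F)$ and taking the alternating sum over $i$, one gets
\begin{equation*}
\sum_{j} b_{4j}(M) - \sum_{j} b_{4j+2}(M) = \sum_{F \subset M^{S^1}} (-1)^{\lambda_F} \left( \sum_{j} b_{4j}(F) - \sum_{j} b_{4j+2}(F) \right),
\end{equation*}
where the sign $(-1)^{\lambda_F}$ arises because shifting the degree by $2\lambda_F$ swaps the roles of the "$0 \bmod 4$" and "$2 \bmod 4$" parts exactly when $\lambda_F$ is odd. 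For an isolated fixed point $p$ the inner parenthesis is just $1$, so the contribution is $(-1)^{\lambda_p}$, which already matches the right-hand side of Theorem \ref{locsig}. So the whole proposition reduces to showing that the positive-dimensional fixed components (each of dimension $2 \bmod 4$) contribute zero to this alternating sum, i.e. that $\sum_j b_{4j}(F) - \sum_j b_{4j+2}(F) = 0$ for each such $F$ — this is what I expect "Claim A" to assert.

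To prove Claim A I would use the hypothesis on the Betti numbers of $M$ together with Theorem \ref{locbet} applied in each individual degree $i \neq 4k$. Since $b_i(M) = b_i(\mathbb{CP}^{4k})$ is $1$ for $i$ even in the range $0 \le i \le 8k$, $i \neq 4k$, and $0$ for $i$ odd, the localisation identity $b_i(M) = \sum_F b_{i-2\lambda_F}(F)$ forces strong constraints: the odd Betti numbers of every fixed component must vanish (so each $F$ has only even cohomology, hence $\sum_j b_{4j}(F) - \sum_j b_{4j+2}(F) = \sigma(F)$... but wait, $F$ has dimension $2 \bmod 4$ so its signature is automatically $0$). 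Actually the cleanest route: a closed oriented manifold $F$ of dimension $\equiv 2 \bmod 4$ has $\sum_j b_{4j}(F) - \sum_j b_{4j+2}(F) = \sigma(F) = 0$, \emph{provided} $F$ has no odd cohomology, since then each summand in the alternating sum of even Betti numbers equals the corresponding one in the signature formula (the odd Betti numbers that normally cancel in pairs are simply absent). So the real content of Claim A is: every positive-dimensional fixed component $F$ has vanishing odd Betti numbers. This should follow from the degree-by-degree localisation: if some $F_0$ had $b_{\mathrm{odd}}(F_0) \neq 0$, this would force some odd $b_i(M) \neq 0$ for $i \neq 4k$ unless it is cancelled, but all terms $b_{i-2\lambda_F}(F) \ge 0$ so there is no cancellation — contradiction. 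One must be slightly careful about the degree $i = 4k$, but odd degrees never hit $4k$, so odd Betti numbers of $M$ are unconstrained only in... no, $4k$ is even, so \emph{all} odd Betti numbers $b_i(M)$ with $i$ odd satisfy $b_i(M) = 0$, and hence every fixed component has no odd cohomology.

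The main obstacle, and the step requiring the most care, is the bookkeeping in the sign $(-1)^{\lambda_F}$ and making sure the index shifts in Theorem \ref{locbet} interact correctly with the split of even degrees into residues $0$ and $2$ modulo $4$ — in particular handling boundary effects near degrees $0$ and $8k$ and the possibility that $2\lambda_F$ exceeds the dimension of $F$ in one direction. A secondary subtlety is confirming that fixed components of dimension $2 \bmod 4$ really contribute $0$ to \emph{Theorem \ref{locsig}} as well (already granted by that theorem's statement) so that the two sides genuinely agree after the positive-dimensional terms drop out. Once Claim A is in place, combining the displayed identity above with Theorem \ref{locsig} gives $\sigma(M) = \sum_{p} (-1)^{\lambda_p} = \sum_j b_{4j}(M) - \sum_j b_{4j+2}(M)$, completing the proof.
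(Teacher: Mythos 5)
Your reduction of the proposition to Claim A is correct and is essentially the paper's own argument: expand the alternating even-degree Betti sum via Theorem \ref{locbet}, observe that the shift by $2\lambda_F$ contributes the sign $(-1)^{\lambda_F}$, note that an isolated point contributes $(-1)^{\lambda_p}$, and match the result against Theorem \ref{locsig}. The weak point is your justification of Claim A. The identity $\sum_j b_{4j}(F) - \sum_j b_{4j+2}(F) = \sigma(F)$ is not a general theorem, even for closed oriented manifolds with no odd cohomology: the K3 surface has no odd cohomology and satisfies $\sum_j b_{4j} - \sum_j b_{4j+2} = 2 - 22 = -20$, while $\sigma = -16$. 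So there is no background ``signature formula'' to appeal to here (the only statement of this shape is the Jones--Rawnsley theorem itself, which is what is being generalised). In dimension $2 \bmod 4$ both sides of your identity do vanish, but for independent reasons, so your argument as written does not actually establish the claim; likewise the vanishing of the odd Betti numbers of the fixed components (which you do derive correctly from $b_{\mathrm{odd}}(M)=0$ and the nonnegativity of the terms in Theorem \ref{locbet}) is not what makes Claim A true.

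The one-line fix is Poincar\'e duality together with $\dim F \equiv 2 \bmod 4$: writing $\dim F = 2m$ with $m$ odd, duality gives $b_{d}(F) = b_{2m-d}(F)$, and since $2m \equiv 2 \bmod 4$ the involution $d \mapsto 2m-d$ carries the even degrees $\equiv 0 \bmod 4$ bijectively onto the even degrees $\equiv 2 \bmod 4$, with no fixed even degree because $m$ is odd. Hence $\sum_j b_{4j}(F) = \sum_j b_{4j+2}(F)$ for every closed oriented $F$ of dimension $2 \bmod 4$, with no hypothesis on odd cohomology and no use of the assumption on $b_i(M)$. The paper proves Claim A by a different (and arguably more roundabout) route: it uses $b_i(M)=b_i(\mathbb{CP}^{4k})$ for $i \neq 4k$ together with Theorem \ref{locbet} to show that every even Betti number of such an $F$ equals $1$, and then counts. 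Either route completes your proof; as it stands, the step ``$=\sigma(F)$'' is a genuine gap that must be replaced by the duality argument above.
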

\begin{proof} We assume that $M$ satisfies the assumptions of the theorem. Let $M^{S^1}$ denote the fixed point set of the action, and let $M^{S^1}_{I} \subset M^{S^1}$ denote the subset consisting of isolated fixed points. Recall that for a connected component $N \subset M^{S^1}$, $\lambda_{N} \in \mathbb{Z}$ denotes half the Morse-Bott index of $N$ with respect to the Hamiltonian.

\textbf{Claim A.} For any component $N \subset M^{S^1}$ with dimension $2 \mod 4$, $$ \sum_{j \in \mathbb{Z}}b_{4j - 2\lambda_N}(N) - \sum_{j \in \mathbb{Z}} b_{4j +2 - 2\lambda_N}(N) = 0. $$

We will first show how the proposition follows from Claim A. We consider the quantity in question $$\alpha =  \sum_{j \in \mathbb{Z}} (b_{4j}(M) - b_{4j +2}(M)) , $$ substituting the formula from Theorem \ref{locbet} implies that $$ \alpha = \sum_{j \in \mathbb{Z} , \: F \subset M^{S^1}} (b_{4j - 2\lambda_F}(F) - b_{4j +2 - 2\lambda_{F}}(F)).   $$
 Next, applying Claim A gives $$ \alpha = \sum_{j \in \mathbb{Z} , \: p \in M^{S^1}_I} (b_{4j - 2\lambda_p}(p) - b_{4j +2 - 2\lambda_{p}}(p))   . $$ Substituting the Betti numbers of a point gives
 
 $$\alpha = |\{p \in M^{S^1}_I: \lambda_{p} \text{ is even} \}| - |\{p \in M^{S^1}_I : \lambda_{p} \text{ is odd} \}|. $$
 
The right hand side of this expression is equal to the expression for $\sigma(M)$ given in Theorem \ref{locbet}. Hence to prove the proposition, it remains to prove Claim A.
 
 \textbf{Proof of Claim A.} Let $N$ be a fixed submanifold with dimension $2 \mod 4$. We will first show that
 
 $$ \sum_{j \in \mathbb{Z}}b_{4j}(N) - \sum_{j \in \mathbb{Z}} b_{4j +2 }(N) = 0. $$
 
 By assumption, $\dim(N) = 2m$ where $m$ is odd. Furthermore since $N$ is a symplectic submanifold, all of the even degree Betti numbers of $N$ are positive. By Theorem \ref{locbet} and Lemma \ref{Lef}, it follows that $N$ has at most one Betti number which is at least $2$. On the other hand, since $m$ is odd and by Poincar\'{e} duality, all of the integers appearing as even degree Betti numbers of $N$ appear in at least two degrees. Hence, by the above all of the even degree Betti numbers of $N$ are equal to $1$. It follows since $m$ is odd that 
 
 $$ \sum_{j \in \mathbb{Z}}b_{4j}(N) - \sum_{j \in \mathbb{Z}} b_{4j +2 }(N) =  1 -1 +  \ldots +1-1 = 0. $$ Finally, the above equation implies Claim A:  $$ \sum_{j \in \mathbb{Z}}b_{4j - 2\lambda_N}(N) - \sum_{j \in \mathbb{Z}} b_{4j +2 - 2\lambda_N}(N) = (-1)^{\lambda_N} (\sum_{j \in \mathbb{Z}}b_{4j}(N) - \sum_{j \in \mathbb{Z}} b_{4j +2 }(N) )= 0. $$

\end{proof}

The next step in the proof of Theorem \ref{main} is an elementary lemma showing that the alternating sum of even degree Betti numbers of a complete intersection of dimension $8k$ is equal to the middle Betti number.

\begin{lemma} \label{can}
Suppose that $M$ is a closed manifold of dimension $8k$ satisfying $b_{i}(M) = b_{i}(\mathbb{CP}^{4k})$ for $i \neq 4k$. Then $$ \sum_{j \in \mathbb{Z}}b_{4j}(M) - \sum_{j \in \mathbb{Z}} b_{4j +2}(M)  = b_{4k}(M). $$
\end{lemma}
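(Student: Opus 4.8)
The plan is to compute the alternating sum directly by splitting off the middle degree and using the hypothesis on the non-middle Betti numbers. First I would recall the Betti numbers of $\mathbb{CP}^{4k}$: we have $b_{2i}(\mathbb{CP}^{4k}) = 1$ for $0 \le i \le 4k$ and all odd Betti numbers vanish. Since $\dim M = 8k$, the middle degree is $4k$, which is divisible by $4$, so $b_{4k}(M)$ appears in the sum $\sum_j b_{4j}(M)$ and not in $\sum_j b_{4j+2}(M)$. The hypothesis $b_i(M) = b_i(\mathbb{CP}^{4k})$ for $i \ne 4k$ then pins down every term of the alternating sum except the $j = k$ term of the first sum.

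Concretely, I would write
\begin{equation*}
\sum_{j \in \mathbb{Z}} b_{4j}(M) - \sum_{j \in \mathbb{Z}} b_{4j+2}(M) = \Big( b_{4k}(M) + \sum_{j \ne k} b_{4j}(M) \Big) - \sum_{j \in \mathbb{Z}} b_{4j+2}(M),
\end{equation*}
and then replace $b_{4j}(M)$ (for $j \ne k$, $0 \le j \le 2k$) and $b_{4j+2}(M)$ (for $0 \le j \le 2k-1$) by the corresponding Betti numbers of $\mathbb{CP}^{4k}$, all of which equal $1$; terms outside the range $0,\dots,8k$ vanish. There are $2k$ indices $j$ with $b_{4j}(\mathbb{CP}^{4k}) = 1$ and $j \ne k$ (namely $j = 0,1,\dots,2k$ minus $j=k$), and there are $2k$ indices $j = 0,1,\dots,2k-1$ with $b_{4j+2}(\mathbb{CP}^{4k}) = 1$. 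Hence the non-middle contributions cancel in pairs, leaving exactly $b_{4k}(M)$.

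The only thing requiring a moment's care — and the closest thing to an obstacle, though it is entirely elementary — is the bookkeeping of the index ranges: one must check that the even degrees $4j$ with $j \ne k$ lying in $[0, 8k]$ are in bijection with the even degrees $4j+2$ in $[0,8k]$, so that the two families of $1$'s match up and cancel. This follows because $\{0, 4, 8, \dots, 8k\} \setminus \{4k\}$ and $\{2, 6, 10, \dots, 8k-2\}$ each have $2k$ elements. I would also note explicitly that no parity or positivity input about $M$ itself is needed beyond the stated Betti number hypothesis, so the lemma is purely combinatorial.
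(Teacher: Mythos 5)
Your proof is correct and follows essentially the same route as the paper, which also just evaluates the alternating sum directly using that all non-middle even Betti numbers equal $1$ and cancel in pairs, leaving $b_{4k}(M)$. Your version merely makes the index bookkeeping (the bijection between the $2k$ degrees in $\{0,4,\dots,8k\}\setminus\{4k\}$ and the $2k$ degrees in $\{2,6,\dots,8k-2\}$) more explicit than the paper's ``$1-1+\cdots$'' shorthand.
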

\begin{proof}
This follows from a direct computation. The alternating sum simplifies as follows: $$ \sum_{j \in \mathbb{Z}}b_{4j}(M) - \sum_{j \in \mathbb{Z}} b_{4j +2}(M)  =  1 -1 \ldots +1-1 + b_{4k}(M) -1+1 \ldots -1 +1 = b_{4k}(M). $$ 
\end{proof}

The proof of Theorem \ref{main} now follows from combining what we have already proven with a theorem of Libgober and Wood, Theorem \ref{libwood}.

\begin{proof}[Proof of Theorem \ref{main}]  Suppose that $M$ satisfies the assumptions of the theorem. By Proposition \ref{gen} 
$$ \sigma(M) = \sum_{j \in \mathbb{Z}}b_{4j}(M) - \sum_{j \in \mathbb{Z}} b_{4j +2}(M) .$$ Moreover, by Lemma \ref{Lef}, $b_{i}(M) = b_{i}(\mathbb{CP}^{4k})$ for $i \neq 4k$. By Lemma \ref{can} $$b_{4k}(M) =  \sum_{j \in \mathbb{Z}}b_{4j}(M) - \sum_{j \in \mathbb{Z}} b_{4j +2}(M)  = \sigma(M). $$ So, since the signature of the intersection form of the middle cohomology is equal to its rank, the intersection form on $H^{4k}(M,\mathbb{R})$ is positive definite. Now the theorem follows from Theorem \ref{libwood}.
\end{proof}

\subsection{Discussion}

\subsubsection{The case when $\dim(M) = 8k+4$.} We remark that if a closed symplectic manifold has dimension $8k+4$ and has a Hamiltonian $S^1$-action satisfying the same condition as Proposition \ref{gen} then identical considerations to Proposition \ref{gen}  and Lemma \ref{can} show that $\sigma(M) = 2- b_{4k+2}(M)$. This may be rephrased as $b_{4k+2}^{+}(M)=1$. A quadric $Q$ with complex dimension $4k+2$ satisfies $b_{4k+2}(Q)=2$ and  $\sigma(Q) = 0$. So the intersection form of $Q$ is the standard hyperbolic form $\tau(x,y) = x^2-y^2$, in a suitable basis of $H^{4k+2}(Q,\mathbb{R})$. In particular, intersection form  is  in general  not positive definite in dimension $8k+4$. 

\subsubsection{ The intersection of two quadrics.} We note that unfortunately Theorem \ref{main} leaves one unsettled case in each dimension: the intersections of two quadrics. Such varieties do not have an algebraic torus action due to \cite{B}[Theorem 3.1]. We leave the following as a question:
\begin{question}
Does the underlying smooth manifold of  an intersection of $2$ quadrics with complex dimension at least $4$  have a symplectic form exhibiting a Hamiltonian circle action?
\end{question}

We note that the underlying smooth manifold of $Q_1 \cap Q_2 \subset \mathbb{CP}^4$ has a Hamiltonian circle action since it diffeomorphic to a blow up of $\mathbb{CP}^2$ in five points. But it is unclear whether this is a low-dimensional coincidence.

Institute of Mathematical Sciences, ShanghaiTech University No. 393 Huaxia Middle Road, Pudong New Area, Shanghai,  China. Email: lindsaynj@shanghaitech.edu.cn
\end{document}